\documentclass[11pt]{article}
\usepackage[a4paper,margin=1in]{geometry}
\usepackage{amsmath,amssymb,amsthm,mathtools}
\usepackage{enumitem}
\usepackage{hyperref}
\hypersetup{
  colorlinks=true,
  linkcolor=blue,
  citecolor=blue,
  urlcolor=blue
}
\newtheorem{theorem}{Theorem}[section]
\newtheorem{lemma}[theorem]{Lemma}
\newtheorem{proposition}[theorem]{Proposition}

\newcommand{\C}{\mathbb C}
\newcommand{\R}{\mathbb R}
\newcommand{\Dc}{D_{\mathbb C}^2}

\newcommand{\SL}{\mathfrak{sl}}
\newcommand{\tr}{\operatorname{tr}}
\newcommand{\norm}[1]{\left\lVert #1\right\rVert}
\title{\(C^{1,1}\) Regularity Global Estimates for Homogeneous Complex Hessian Equations on Punctured Domains}
\author{Zhenghuan Gao\textsuperscript{1}, Xi-Nan Ma\textsuperscript{2}, Bao Yu\textsuperscript{2}, and Dekai Zhang\textsuperscript{3}}
\date{}
\begin{document}
\maketitle
\begin{abstract}
In this paper, we prove the \(C^{1,1}\) regularity of Green functions
associated with the complex \(k\)-Hessian operator for \(1\leq k<n\), and give
a new proof of the corresponding regularity of pluricomplex Green functions.
We establish real Hessian estimates for approximating problems associated
with homogeneous complex Hessian equations on punctured domains. The main idea
is to introduce new auxiliary functions generated by complex linear
vector fields to reduce the global real Hessian estimate to
the boundary. We also treat the complex
Monge-Amp\`ere case. Similar real Hessian estimates also works for the exterior problem.

\end{abstract}

\noindent\textbf{Keywords.}
Complex Hessian equation; complex Monge-Amp\`ere equation; real Hessian estimates; $C^{1,1}$ regularity.\\
\medskip

\noindent\textbf{MSC 2020.}
32W20, 35J60, 35B45, 32U35.

\section{Introduction}
Let \(\Omega\) be a domain in \(\mathbb C^n\) and \(0\in\Omega\). Let \(u\) be a solution to the following homogeneous complex \(k\)-Hessian equation,
\begin{equation}\label{maineq::m-subharmonic-Green-function}
    \begin{cases}
        (dd^cu)^k\wedge \omega^{n-k}=0&\quad\text{in }\Omega\setminus\{0\},\\
        u=-1&\quad\text{on }\partial\Omega,\\
        u=-|z|^{2-\frac{2n}k}+O(1)&\quad\text{as }z\rightarrow 0.
    \end{cases}
\end{equation}
Here \(dd^c=2\sqrt{-1}\partial\bar\partial\) and \(\omega=dd^c|z|^2\). 
We also call $u$ as the Green function associated with the complex $k$-Hessian operator.
The main goal of this paper is to prove the following result.

\begin{theorem}\label{thm:green-k}
    Let \(\Omega\subset\mathbb C^n\) be a smooth bounded (\(k\)-\(1\))-pseudoconvex domain containing the origin, \(1\le k<n\). Let \(u\) be the \(k\)-subharmonic solution of \eqref{maineq::m-subharmonic-Green-function}. Then \(u\in C_{\text{loc}}^{1,1}(\overline\Omega\setminus\{0\})\) and  for a.e. $z\in \overline\Omega\setminus\{0\}$, 
    \begin{align*}
|D^2 u(z)|\le C|z|^{-\frac{2n}{k}}.   
    \end{align*}
\end{theorem}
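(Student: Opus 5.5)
We approximate $u$ by smooth solutions of nondegenerate problems and prove real Hessian bounds that do not depend on the approximation parameter. Fix small $r>0$ and $\varepsilon>0$, and let $\Omega_r=\Omega\setminus\overline{B_r}$. Let $u^{\varepsilon,r}$ be the smooth $k$-subharmonic solution of
\[
(dd^c v)^k\wedge\omega^{n-k}=\varepsilon\,\omega^n \quad\text{in } \Omega_r,\qquad v=-1 \text{ on }\partial\Omega,\qquad v=-r^{2-\frac{2n}{k}} \text{ on } \partial B_r .
\]
This solution exists by the Caffarelli--Nirenberg--Spruck / Li theory. Its boundary data are admissible: $\Omega$ is $(k-1)$-pseudoconvex, and $\partial B_r$ is a level set of the model solution $-|z|^{2-2n/k}$.

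Comparison with explicit barriers built from $-|z|^{2-2n/k}$ and a defining function of $\Omega$ gives $|v|\le C|z|^{2-2n/k}$. Standard arguments then give a gradient bound $|Dv|\le C|z|^{1-2n/k}$, with a constant $C$ independent of $\varepsilon$ and $r$. As $\varepsilon\to0$ and then $r\to0$, the functions $v$ converge to $u$, because the solution of \eqref{maineq::m-subharmonic-Green-function} is unique. So it suffices to prove the uniform weighted bound $|D^2 v|\le C|z|^{-2n/k}$ on $\Omega_r$. The $C^{1,1}_{\rm loc}$ regularity and the a.e. estimate then follow by weak-$*$ compactness in $W^{2,\infty}_{\rm loc}$.

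The key step is to reduce the global real Hessian estimate to the boundary using complex linear vector fields. The equation is invariant under the complex linear maps $z\mapsto Az$ with $A\in U(n)$, and under complex dilations $z\mapsto e^{t}z$; the dilations change the right-hand side only by a constant factor. Therefore, for a holomorphic linear vector field $X=\sum (Az)_j\partial_{z_j}$ (for instance $A=I$, the radial field, or $A$ skew-Hermitian), the derivatives $\mathrm{Re}\,Xv$ and $\mathrm{Im}\,Xv$ satisfy the linearized equation, up to a constant multiple of $\varepsilon$. Second derivatives $XXv$ are subsolutions modulo terms controlled by concavity of $\sigma_k^{1/k}$.

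We form the auxiliary quantity
\[
Q=\sup_{X}\frac{|X\bar X v|+|XXv|}{\ldots}+ \text{(first-order correction)},
\]
normalized by the weight $|z|^{-2n/k}$. The normalization is chosen so that $Q$ is homogeneous of degree zero under dilation; equivalently, we work with the rescaled function $|z|^{2n/k-2}v$ in the variable $\log|z|$. The fields $X$ with $|A|\le 1$ span all directions at $z\neq0$, with size comparable to $|z|$. Hence bounds on $XXv$ and $X\bar Xv$ control $|z|^2|D^2v|$ up to lower-order terms involving $Dv$, which are already bounded. The maximum principle for the linearized operator then shows that $Q$ attains its maximum on $\partial\Omega\cup\partial B_r$.

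On the boundary we estimate $D^2v$ directly. On $\partial B_r$ the subsolution is exact: $v$ agrees with a radial barrier up to small error, and rotational fields are tangential. This yields $|D^2v|\le Cr^{-2n/k}$ by the standard tangential/mixed/normal argument, applied after rescaling $B_r$ to the unit ball, where the problem has uniformly controlled data. On $\partial\Omega$ we use the Caffarelli--Nirenberg--Spruck boundary estimates. They rely on the $(k-1)$-pseudoconvexity of $\partial\Omega$, and their constants are independent of $\varepsilon$ thanks to the barrier $\underline{v}$ built from the defining function together with $-|z|^{2-2n/k}$.

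I expect the main obstacle to be the interior maximum principle step. We must check that the third-order terms coming from differentiating the equation twice along $X$ have the right sign. The issue is that $XXv$ is not a second derivative along a fixed constant direction, so commutator terms $[X,\bar X]$ and the dependence of $X$ on $z$ produce lower-order terms. I would try first the specific combination $X=z\cdot\partial_z$ together with the rotation fields $X_{A}$ for $A\in\mathfrak u(n)$. I would show that their commutators stay inside this Lie algebra, so that the errors are linear combinations of $Xv$, $X\bar Xv$ within the same family. These errors can then be absorbed by adding $\lambda|Xv|^2$ and a multiple of $v$ to $Q$. The $k<n$ restriction enters in checking that $\mathcal L(|z|^{2})$ and $\mathcal L(v)$ give enough positivity to absorb those terms uniformly in $\varepsilon$.
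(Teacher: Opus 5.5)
There is a genuine gap, located at the step you yourself flag as the main obstacle. You have the right ingredients. One is the symmetries of $\sigma_k$ given by complex dilations and $U(n)$. The other is the fact that the linear fields $z\mapsto Bz$ with $B\in\R I\oplus\mathfrak u(n)$ of bounded norm reach every real direction at $z_0$ with length $|z_0|$, which is Lemma~\ref{lem:linear-algebra}. But the auxiliary function $Q$ is never defined, and the shape you sketch (a supremum of $|XXv|+|X\bar Xv|$ plus corrections) cannot be closed by the maximum principle.

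\begin{itemize}
\item Controlling $|XXv|$ means controlling $\mathrm{Re}(e^{i\theta}XXv)$ for all $\theta$. That requires fields generated by $e^{i\theta/2}A$, which leave $\R I\oplus\mathfrak u(n)$ and are not symmetries of $\sigma_k$ for $k<n$.
\item For the same reason, your claim that $\mathrm{Im}\,Xv$ solves the linearized equation is false for non-scalar skew-Hermitian $A$: then $-iA$ is Hermitian, and $z\mapsto e^{-itA}z$ does not preserve $\sigma_k$.
\item A subsolution argument only gives one-sided bounds, so absolute values are the wrong target. You also never say where a lower bound on $D^2v$ comes from.
\end{itemize}

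The third-order terms, commutators, $\lambda|Xv|^2$ absorption and positivity from $\mathcal L(|z|^2)$ that you anticipate are symptoms of not having the right quantity. None of them arises, and $k<n$ plays no role in this step.

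The missing idea is this. Fix $z_0$ and a real unit direction $\xi$, and use a single one-parameter group $E_t=e^{tB}$ with $B=aI+K$, $K\in\mathfrak u(n)$, $Bz_0=|z_0|\xi$ and $|a|+\norm{B}+\norm{B^2}\le C_n$. Then $u_t(z)=e^{-2at}u(E_tz)$ is an exact family of solutions of $\sigma_k(\Dc u_t)=\varepsilon$. Your remark that dilations only rescale $\varepsilon$ would also do, since $\log\sigma_k$ then changes by $2akt$, which is affine in $t$.

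Differentiating $\log\sigma_k(\Dc u_t)=\log\varepsilon$ twice in $t$ at $t=0$ (dots denote $t$-derivatives there) gives $L\ddot u_0=-F^{i\bar j,p\bar q}(\dot u_0)_{i\bar j}(\dot u_0)_{p\bar q}\ge0$, with no error terms. Here $\ddot u_0=D^2u[Bz,Bz]+Du\cdot B^2z-4a\,Du\cdot Bz+4a^2u=Y^2u-4aYu+4a^2u$ for the real field $Y(z)=Bz$. In your notation $Y=X+\bar X$, and $Y^2u=2\,\mathrm{Re}(XXu)+2X\bar Xu$ because $[X,\bar X]=0$.

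No weight or rescaling is needed. Since $Lu=k$, $W=\ddot u_0+Cu$ satisfies $LW>0$, and \eqref{eq:known-estimates}, \eqref{eq:boundary-second-estimate} give $W\le0$ on $\partial\Omega_r$. Hence $|z_0|^2D^2u(z_0)[\xi,\xi]\le C(-u(z_0))$ for every $\xi$. The lower eigenvalue bound then follows from $\Delta u>0$, which holds because $\sigma_1(\Dc u)>0$ for $k$-admissible $u$.

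A secondary point concerns your approximating problem with data $-r^{2-2n/k}$ on $\partial B_r$. It is not covered by the CNS/Li theory as you state, because $\partial B_r$ is pseudoconcave as seen from $\Omega_r$; solvability needs a subsolution attaining those boundary values. Likewise, the uniform bounds \eqref{eq:known-estimates}--\eqref{eq:boundary-second-estimate}, including the weighted Hessian bound on $\partial B_r$, are substantial results obtained with a specially designed $\underline u$ as inner boundary data. They should be cited rather than described as standard.
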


The study of existence of pluricomplex Green function on pseudoconvex domain goes back to the work of B. Guan in \cite{Guan1998,Guan2007} and B\l{}ocki in \cite{Blocki2000}.
For smooth bounded strongly pseudoconvex domains,
B. Guan \cite{Guan1998}
constructed smooth approximating solutions and established
\(C^{1,\alpha}\) regularity of the pluricomplex Green function which is a solution to the following problem  
\[\begin{cases}
    (dd^cu)^n=0&\quad\text{in }\Omega\setminus\{0\},\\
    u=0&\quad\text{on }\partial\Omega,\\
    u=\log|z|+O(1)&\quad\text{as }z\rightarrow 0.
\end{cases}\]  
Moreover, he proved the following important  gradient estimate and complex Hessian estimates
\begin{align*}
|Du|^2(z)+\Delta u(z)\le C|z|^{-2}.
\end{align*}
Using holomorphic changes of the variables
and second differences, B\l{}ocki \cite{Blocki2000} proved the \(C^{1,1}\) regularity . 

These results motivate the regularity problem for the more general complex
\(k\)-Hessian operator for \(1\leq k<n\), whose fundamental solution has a
power singularity rather than a logarithmic singularity.  
The authors in \cite{GaoMaZhang2023} proved the existence and
uniqueness of a \(C^{1,\alpha}\) solution to
\eqref{maineq::m-subharmonic-Green-function}. They established the estimates
\[
  |Du(z)|\leq C|z|^{1-2n/k},
  \qquad \Delta u(z)\leq C|z|^{-2n/k}.
\]
In order to get above results, they constructed a suitable subsolution to solve the approximating \(k\)-Hessian equation in \(\Omega\setminus \overline{B_r}\). Then they proved uniform gradient and complex Hessian estimates for the
approximating solutions, as well as real Hessian estimates on the boundary.

What remained unresolved is a uniform
 real Hessian estimate. Indeed, control
of \(D_{\mathbb C}^2u=(u_{i\bar j})\) does not directly control the pure
second derivatives \(u_{ij}\), and a bounded Laplacian does not in general
yield a bounded real Hessian. Moreover, the degeneracy of the homogeneous
equation cannot allow us to apply the uniformly elliptic regularity
theory with constants independent of the approximation parameters.

Our aim is to establish the real Hessian estimate and thereby
improve the regularity from \(C^{1,\alpha}\) to \(C^{1,1}\). Throughout this
paper, \(Du\) and \(D^2u\) denote the real gradient and real Hessian, while
\[
  \Dc u=(u_{i\bar j})_{1\leq i,j\leq n},
  \qquad
  u_{i\bar j}=\frac{\partial^2u}{\partial z_i\partial\bar z_j},
\]
denotes the complex Hessian matrix. 

Let \(\Omega_r=\Omega\setminus\overline{B_r(0)}\). Let \(u^{\varepsilon,r}\) be the solution to the approximating problem
\begin{equation}\label{eq:intro-k-approx}
  \begin{cases}
    \sigma_k(\Dc u^{\varepsilon,r})=\varepsilon &\text{in }\Omega_r,\\
    u^{\varepsilon,r}=\underline u &\text{on }\partial B_r(0),\\
    u^{\varepsilon,r}=-1 &\text{on }\partial\Omega,
  \end{cases}
\end{equation}
where \(\underline u\) is a well-designed  subsolution. According to \cite{GaoMaZhang2023}, \(u^{\varepsilon,r}\) satisfies the following estimates,
\[
  C_0^{-1}|z|^\gamma\leq -u^{\varepsilon,r}(z)\leq C_0|z|^\gamma,
  \qquad |z|\,|Du^{\varepsilon,r}(z)|\leq C_0(-u^{\varepsilon,r}(z))
  \quad\text{in }\Omega_r,
\]
and
\[
  |z|^2|D^2u^{\varepsilon,r}(z)|\leq C_0(-u^{\varepsilon,r}(z))
  \quad\text{on }\partial\Omega_r,
\]
where \(C_0\) is a positive constant independent of \(\varepsilon\) and \(r\).
We prove the following estimate.

\begin{theorem}
\label{thm:intro-k-hessian}
Let \(1\leq k<n\), let \(\Omega\) be as in Theorem~\ref{thm:green-k}, and set
\(\gamma=2-2n/k\). Suppose that
\(u^{\varepsilon,r}\in C^4(\Omega_r)\cap C^2(\overline{\Omega_r})\)
is a negative \(k\)-admissible solution of \eqref{eq:intro-k-approx}.
Then there is a constant \(C\), depending only on \(n,k,\Omega\), such that
\begin{equation}\label{eq:intro-k-hessian}
  |D^2u^{\varepsilon,r}(z)|\leq C|z|^{-2n/k}
  \quad\text{in }\overline{\Omega_r}.
\end{equation}
In particular, \(C\) is independent of the approximation parameters
\(\varepsilon\) and \(r\).
\end{theorem}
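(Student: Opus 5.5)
We reduce the global real Hessian bound to the boundary bound already known from \cite{GaoMaZhang2023}, by a maximum-principle argument applied to an auxiliary function built from complex linear vector fields. Write \(u=u^{\varepsilon,r}\) and \(F(\Dc u)=\sigma_k^{1/k}(\Dc u)\), which is concave on the \(k\)-admissible cone. Let \(L=F^{i\bar j}\partial_i\partial_{\bar j}\) be the linearized operator.

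The key structural observation is how the equation behaves under complex linear vector fields. Let \(A\in\mathfrak{gl}(n,\C)\) and set
\[
X_A=\sum_{i,j}A_{ij}z_j\partial_{z_i}.
\]
If \(A\) is anti-Hermitian, the flow of \(X_A+\overline{X_A}\) is unitary. It therefore preserves \(\sigma_k(\Dc u)\), \(|z|\), and the spheres \(\partial B_r\), and it maps \(\sigma_k=\varepsilon\) to itself.

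\textbf{Step 1 (general vector fields).} For a general \(A\), take \(\Phi_t(z)=e^{tA}z\), which is holomorphic. Then \(\Dc(u\circ\Phi_t)=(D\Phi_t)^*(\Dc u\circ\Phi_t)D\Phi_t\). Consequently
\[
\sigma_k\bigl(\Dc(u\circ\Phi_t)\bigr)=\varepsilon\,\bigl|\det\bigr|^{\cdots}
\]
holds only for \(k=n\); for \(k<n\) the equation changes by an explicit, controlled amount. Differentiating \(F(\Dc(u\circ\Phi_t))\) twice in \(t\) at \(t=0\), and using the concavity of \(F\), gives
\[
L(Tu)\ge -C\,\bigl(|D^2u|+\text{lower order}\bigr)\sum F^{i\bar i}
\qquad\text{for } T=X_A+\overline{X_A},
\]
and a similar inequality for second derivatives \(TTu\). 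The point is that every real second derivative at \(z\) is a combination of \(TTu\), \(Tu\) and \(\Dc u\), provided we use translations \(\partial_{z_i}\) together with the fields \(z_j\partial_{z_i}\). We therefore use translations and rotations/dilations: \(TT'u\) with \(T,T'\in\{\partial_{x_i},\partial_{y_i}\}\) recovers \(D^2u\). Since \(\Dc u\) is controlled by \(\Delta u\le C|z|^{-2n/k}\), only the pure part \(u_{ij}\) must be bounded.

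\textbf{Step 2 (auxiliary function).} The weighted quantity should scale like the singularity. We consider
\[
W=\max_{|\xi|=1}\ \frac{|z|^{2}\,u_{\xi\xi}}{-u}\;+\;\beta\,\frac{|z|^2|Du|^2}{u^2}\;+\;\lambda\,\psi(z),
\]
where \(\psi\) is a defining function of \(\Omega\) (strictly \(k\)-subharmonic, since \(\Omega\) is \((k-1)\)-pseudoconvex) combined with \(|z|^2\). The normalization by \((-u)\approx|z|^\gamma\) and \(|z|^2\) is chosen to make \(W\) invariant under the dilation field \(z\cdot\partial_z\), which is the complex linear field that commutes with the homogeneity of the model solution \(-|z|^{\gamma}\). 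At an interior maximum of \(W\), we rotate coordinates so that \(\Dc u\) is diagonal. We then use the third-derivative terms coming from concavity, namely \(-F^{i\bar j,k\bar l}u_{i\bar j\xi}u_{k\bar l\xi}\ge 0\), together with the gradient term \(\beta|Du|^2\). The latter produces a good term \(\sum F^{i\bar i}(|u_{i\xi}|^2+|u_{\bar i\xi}|^2)\), and this absorbs the error terms of the form \(|z|^{-1}|D^3u|\cdot\) lower order. The \(\lambda\psi\) term supplies the positive quantity \(\lambda\sum F^{i\bar i}\), which absorbs the remaining constants. Here we use that \(\sum F^{i\bar i}\ge c>0\) for \(F=\sigma_k^{1/k}\).

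\textbf{Step 3 (conclusion).} Since there can be no interior maximum unless \(W\le C\), the maximum lies on \(\partial\Omega_r\). There the assumed boundary estimate \(|z|^2|D^2u|\le C_0(-u)\), together with \(|z||Du|\le C_0(-u)\), bounds \(W\). This yields \(|D^2u|\le C(-u)|z|^{-2}\le C|z|^{\gamma-2}=C|z|^{-2n/k}\).

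\textbf{Main obstacle.} We expect the hardest step to be Step 2: controlling the cross terms generated by differentiating the weights \(|z|^2/(-u)\) along directions \(\xi\). These give terms like \(u_\xi u_{\xi\xi}/u\). They are only borderline controllable by \(|z||Du|\le C_0(-u)\), because the degenerate equation gives no lower bound on \(F^{i\bar i}\) individually. Our first attempt would be to replace the fixed directions \(\xi\) by complex linear fields \(X_A\) adapted to \(z\), in particular the radial field \(z\cdot\partial_z\) and unitary fields. These fields commute better with the weight, since \(X_A|z|^2\) is explicit. We would then exploit the homogeneity \(X u\approx\gamma u\) of the model to cancel the leading cross terms.
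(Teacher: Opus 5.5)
\textbf{There is a genuine gap: Step~2 is the whole argument, you never carry it out, and as set up it does not close.}

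Write $h=|z|^2/(-u)$, so that $W\approx h\,u_{\xi\xi}$. Take $F=\log\sigma_k$; the choice $\sigma_k^{1/k}$ only rescales $L$ by a constant.

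At an interior maximum, use $\nabla W=0$ to eliminate the third derivatives $(u_{\xi\xi})_{\bar j}$. This leaves the term $u_{\xi\xi}\bigl(Lh-2F^{i\bar j}h_ih_{\bar j}/h\bigr)$, which has no sign. It has three pieces:
\begin{enumerate}
\item $h|z|^{-2}\bigl(\sum_iF^{i\bar i}-2F^{i\bar j}\bar z_iz_j|z|^{-2}\bigr)$. This is negative when $F^{i\bar j}$ concentrates radially, as it does for the model $-|z|^\gamma$ when $2k>n$.
\item A cross term bounded only by $C_1h|z|^{-2}\sum_iF^{i\bar i}$.
\item The positive piece $kh/(-u)$. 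It is negligible, since $\sum_iF^{i\bar i}\ge c\,\varepsilon^{-1/k}$.
\end{enumerate}
So you face an error of order $W|z|^{-2}\sum_iF^{i\bar i}$. Absorbing it would require a good term of size $W^2|z|^{-2}\sum_iF^{i\bar i}$, and nothing in your $W$ provides one:
\begin{enumerate}
\item The concavity term only involves the mixed derivatives $u_{i\bar j\xi}$.
\item The $\beta$-term yields $\sum_iF^{i\bar i}|u_{i\xi}|^2$. This is not comparable to $u_{\xi\xi}^2\sum_iF^{i\bar i}$ when the $F^{i\bar i}$ degenerate.
\item $\lambda\psi$ gives only $\lambda c\sum_iF^{i\bar i}$, with no factor $|z|^{-2}$. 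It cannot dominate uniformly as $r\to0$ unless $\lambda\gtrsim r^{-2}$, and that destroys the boundary bound for $W$.
\end{enumerate}

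\textbf{Step~1 does not rescue this.} A non-scalar Hermitian part of $A$ genuinely changes the equation. In a unitary frame diagonalizing $\Dc u$ you get $L(Tu)=2\sum_iF^{i\bar i}\lambda_iA_{ii}$. Here $F^{i\bar i}\lambda_i=1-\sigma_k(\lambda|i)/\varepsilon$, with $\lambda_i$ omitted in $\sigma_k(\lambda|i)$, which is not bounded uniformly in $\varepsilon$. Translations do give $Lu_{\xi\xi}\ge0$, but the maximum principle then yields only $u_{\xi\xi}\le\max_{\partial\Omega_r}u_{\xi\xi}\sim r^{-2n/k}$.

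\textbf{The missing idea.} Your closing remark points toward it but keeps the field inside the quotient $W$. The right linear field makes the equation \emph{exactly} invariant, so you need neither a weighted quotient nor any third-derivative analysis.

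Fix $z_0$ and a unit $\xi\in\R^{2n}$ with complex form $\chi$. A scalar plus a skew-Hermitian matrix already suffices: there is $B=aI+K$ with $K\in\mathfrak u(n)$, $Bz_0=|z_0|\chi$ and $|a|+\norm{B}+\norm{B^2}\le C_n$. Set $u_t(z)=e^{-2at}u(e^{tB}z)$. Then $\Dc u_t=U_t^T\,\Dc u(e^{tB}z)\,\overline{U_t}$ with $U_t=e^{tK}$ unitary, so $\sigma_k(\Dc u_t)=\varepsilon$ exactly.

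Differentiate $\log\sigma_k(\Dc u_t)=\log\varepsilon$ twice at $t=0$ and use concavity. This gives $Lw\ge0$, with no error terms, for
\[
w=D^2u[Bz,Bz]+Du\cdot B^2z-4a\,Du\cdot Bz+4a^2u .
\]
The weight is built in, because $|Bz|\le C_n|z|$. The boundary and gradient estimates therefore give $w\le C(-u)$ on $\partial\Omega_r$, uniformly in $B$. Since $Lu=k$, the maximum principle applied to $w+Cu$ gives $w\le C(-u)$ in $\Omega_r$.

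At $z_0$ you have $D^2u[Bz_0,Bz_0]=|z_0|^2D^2u(z_0)[\xi,\xi]$, and the remaining terms are $O(-u(z_0))$. This bounds the eigenvalues of $D^2u(z_0)$ above by $C(-u)|z_0|^{-2}$. The condition $\Delta u>0$ then bounds them below.
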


The proof of the  real Hessian estimate for the approximating solution is based on the unitary invariance of the complex \(k\)-Hessian operator, the gradient estimate and the boundary Hessian estimate. We observe that for any point $z_0\in \Omega_r$ and in any  unit vector $\xi\in \mathbb C^n$, there exists a uniformly bounded matrix \(B=aI+K\), with \(K\) a skew Hermitian matrix satisfying $Bz_0=|z_0|\xi$. We then consider the function
$$
u^{\varepsilon,r}_t(z)=e^{-2at}u^{\varepsilon,r}(e^{tB}z),
$$
which preserves the complex \(k\)-Hessian equation. Differentiating the equation twice in \(t\) and using the concavity of \(\log\sigma_k\),  we prove that the function $$\frac{d^2u^{\varepsilon,r}_{t}}{dt^2}\bigg{|}_{t=0}+M u^{\varepsilon,r}$$ 
attains its maximum on the boundary. Together with the gradient estimates and the boundary Hessian estimate, we get the real Hessian estimate \eqref{eq:intro-k-hessian} and  this implies Theorem~\ref{thm:green-k}. The same argument also works for the exterior problem. One can see see Section 4 for details.

For the complex Hessian equations, Li \cite{Li2004} studied the smooth nondegenerate
Dirichlet problem, while B\l{}ocki \cite{Blocki2005} developed the weak solution
theory.
For the homogeneous complex Monge-Amp\`ere equation, B. Guan \cite{Guan2007}
proved \(C^{1,1}\) regularity of pluricomplex Green functions with pole
at infinity through the homogeneous complex Monge-Amp\`ere exterior
problem. Gao-Ma-Zhang's earlier work \cite{GaoMaZhangReal2023} obtained
weighted second derivative estimates for the homogeneous real
\(k\)-Hessian equation in punctured domains. Their work \cite{GaoMaZhangExterior} established \(C^{1,1}\)
regularity for the complex \(k\)-Hessian exterior problem. The present
paper improves the \(C^{1,\alpha}\) regularity in
\cite{GaoMaZhang2023} for the interior isolated-singularity problem
to \(C^{1,1}\).

The method also applies to the homogeneous complex Monge-Amp\`ere
equation. In this case, it gives a new proof of the \(C^{1,1}\) regularity
of the pluricomplex Green function with logarithmic pole established by B\l{}ocki
\cite{Blocki2000}.
The above argument does not work directly since $u\sim \log|z|$. For 
a trace free matrix \(B\in\mathfrak{sl}(n,\C)\) which is chosen so that \(Bz_0\) points in a prescribed real direction, we consider \(u_t(z)=u(e^{tB}z)\). Since \(\det e^{tB}=1\), the Monge--Ampère equation is preserved. Differentiating twice in \(t\) and using the concavity of \(\log\det\), the maximum principle reduces the global real Hessian estimate to the boundary Hessian and gradient estimates. For the exterior problem, we show $\frac{d^2u_{t}}{dt^2}\big{|}_{t=0}-C(\sum_{j}u_{z_j}z_j + \sum_{j}u_{\bar z_j}\bar z_j  )$ is unifomly bounded.

In addition, we  also obtain the real Hessian decay estimates of the solution to the exterior problem of homogeneous complex \(k\)-Hessian equation in \cite{GaoMaZhangExterior} and complex Monge-Amp\`ere equation in \cite{Guan2007}.

The paper is organized as follows. Section~\ref{sec:k-hessian} establishes
 the real Hessian estimate for the complex
\(k\)-Hessian equation, which yield
Theorem~\ref{thm:intro-k-hessian}.
Section~\ref{sec:cma-hessian} treats the real Hessian estimate for the  complex Monge-Amp\`ere equation
using traceless complex matrices. In Section \ref{section4}, we prove the real Hessian decay estimates for the exterior homogeneous complex \(k\)-Hessian equation and exterior complex Monge-Amp\`ere equation.

\section{The Real Hessian Estimate for the Complex Hessian Equation}\label{sec:k-hessian}
Throughout this section, let \(1\leq k<n\). We use the notation
\(
  \mathfrak u(n)=\{A\in M_n(\C): A^*=-A\}
\)
for the Lie algebra of skew-Hermitian matrices and \(A^*:=\bar A^T\) denotes the conjugate transpose of \(A\). For any Hermitian matrix $A$ and any vector $\xi$, we denote
\[A[\xi,\xi]=A_{i\bar j}\xi_{i}\bar \xi_{j}.\]

To get the Hessian estimate for the homogeneous Hessian equation, we need the following elementary lemma which says that any unit vector in \(\C^n\) can be
mapped to any other unit vector by a uniformly bounded operator in
\(\R I+\mathfrak u(n)\). For completeness, we include the proof.

\begin{lemma}\label{lem:linear-algebra}
For any \(u,v\in\C^n\) with \(\lvert u\rvert=\lvert v\rvert=1\), there exist \(a\in\R\) and \(K\in\mathfrak u(n)\) such that \(B:=aI+K\) satisfies \(Bu=v\). Moreover, there exists a constant \(C_n\), depending only on \(n\), such that
$$
  |a|+\norm{B}+\norm{B^2}\leq C_n.
$$

\end{lemma}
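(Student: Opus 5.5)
We prove the lemma by an explicit construction in the complex plane spanned by \(u\) and \(v\). Write \(\langle x,y\rangle=\sum_i x_i\bar y_i\) and set \(\lambda=\langle v,u\rangle\), so \(|\lambda|\le 1\). Decompose \(v=\lambda u+w\) with \(w\perp u\) and \(|w|^2=1-|\lambda|^2\). We look for \(B=aI+K\) with \(K\in\mathfrak u(n)\). The condition \(Bu=v\) splits into the \(u\)-component \(a+\langle Ku,u\rangle=\lambda\) and the orthogonal part \(Ku-\langle Ku,u\rangle u=w\).

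\textbf{Constructing \(a\) and \(K\).} For skew-Hermitian \(K\), the number \(\langle Ku,u\rangle\) is purely imaginary. So we take \(a=\operatorname{Re}\lambda\) and require \(\langle Ku,u\rangle=\sqrt{-1}\,\operatorname{Im}\lambda\). We then define \(K\) on the span of \(u\) and \(w\) and set it to zero on the orthogonal complement:
\[
K=\sqrt{-1}\,(\operatorname{Im}\lambda)\,u u^* + w u^* - u w^*.
\]
This matrix is skew-Hermitian: the first term is \(\sqrt{-1}\) times a real multiple of a Hermitian projection, and \((wu^*-uw^*)^*=uw^*-wu^*\). Using \(u^*u=1\) and \(w^*u=0\), we get
\[
Ku=\sqrt{-1}(\operatorname{Im}\lambda)u+w .
\]
Hence \(Bu=\operatorname{Re}\lambda\, u+\sqrt{-1}\operatorname{Im}\lambda\, u+w=\lambda u+w=v\). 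The same formula works when \(w=0\) (that is, \(v=\lambda u\) with \(|\lambda|=1\)), so no case distinction is needed.

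\textbf{Bounds.} We have \(|a|\le|\lambda|\le1\). For the operator norm, \(\|K\|\le|\operatorname{Im}\lambda|+2|w|\le 3\). Therefore \(\|B\|\le 4\) and \(\|B^2\|\le\|B\|^2\le16\), so \(C_n=21\) works, and it is in fact independent of \(n\).

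There is no serious obstacle. The only point requiring care is that the real part of the \(u\)-component of \(v\) cannot be produced by a skew-Hermitian matrix and must come from \(aI\). This is precisely why the lemma allows the extra real scalar \(a\).
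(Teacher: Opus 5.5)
Your construction is correct and is essentially the paper's. The paper first normalizes to $u=e_1$ by a unitary change of coordinates and writes down the explicit matrix with first column $(ib,v_2,\dots,v_n)^T$ and first row $(ib,-\overline{v_2},\dots,-\overline{v_n})$, which is exactly your $K=\sqrt{-1}(\operatorname{Im}\lambda)uu^*+wu^*-uw^*$ in those coordinates, so conjugating back yields your formula. The only difference is that the paper bounds $\|K\|$ through the Frobenius norm to get $\|K\|\le\sqrt2$, whereas your triangle-inequality bound $\|K\|\le 3$ is cruder but equally sufficient.
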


\begin{proof}
Suppose first that \(u=e_1\). Write \(v_1=a+ib\), where \(a,b\in\R\), and set
$$
K=
\begin{pmatrix}
ib & -\overline{v_2} & \cdots & -\overline{v_n}\\
v_2 & 0 & \cdots & 0\\
\vdots & \vdots & \ddots & \vdots\\
v_n & 0 & \cdots & 0
\end{pmatrix}.
$$
Then \(K^*=-K\) and \(Be_1=v\) for \(B=aI+K\). Since \(|v|=1\), we have \(|a|\leq1\) and
$$
\norm{K}^2
=b^2+2\sum_{j=2}^n|v_j|^2\leq2.
$$
Hence \(\norm{B}\leq1+\sqrt2\) and \(\norm{B^2}\leq\norm{B}^2\leq(1+\sqrt2)^2\).

For general \(u\), choose  the unitary matrix \(U\) such that \(Uu=e_1\), and put \(\widetilde v=Uv\). By the preceding construction, there exist \(a\in\R\) and \(\widetilde K\in\mathfrak u(n)\) such that \(\widetilde B:=aI+\widetilde K\) satisfies \(\widetilde B e_1=\widetilde v\), with the same bounds as above. Set \(K=U^{-1}\widetilde K U\) and \(B=aI+K=U^{-1}\widetilde B U\). Then \(K\in\mathfrak u(n)\), \(Bu=v\), and, since \(U\) is unitary, \(\norm{B}=\|\widetilde B\|\) and \(\norm{B^2}=\|\widetilde B^2\|\). The desired estimate follows.
\end{proof}

We recall the following estimates from \cite{GaoMaZhang2023}. In that work the
authors study the interior problem for homogeneous complex \(k\)-Hessian
equations through the approximating problem
\begin{equation}\label{eq:approx-problem}
\begin{cases}
  \sigma_k(\Dc u)=\varepsilon, & \text{in }\Omega_r,\\
  u=\underline u, & \text{on }\partial B_r,\\
  u=-1, & \text{on }\partial\Omega,
\end{cases}
\end{equation}
where \(\Omega_r=\Omega\setminus \overline{B_r(0)}\), and
\[
  \gamma=2-\frac{2n}{k}.
\]
If \(u\in C^4(\Omega_r)\cap C^2(\overline{\Omega_r})\) is a negative \(k\)-admissible solution of
\eqref{eq:approx-problem}, then one has the following uniform estimates
\begin{equation}\label{eq:known-estimates}
  c_0|z|^\gamma\leq -u(z)\leq C_0|z|^\gamma,\qquad
  |z|\,|Du(z)|\leq C_1(-u(z))\quad\text{in }\Omega_r,
\end{equation}
and
\begin{equation}\label{eq:boundary-second-estimate}
  |z|^2\,|D^2u(z)|\leq C_2(-u(z))
  \quad\text{on }\partial\Omega_r .
\end{equation}

The next proposition reduces the global real Hessian estimate to the boundary
estimate. 
Combining it with \eqref{eq:known-estimates} and
\eqref{eq:boundary-second-estimate} yields the global second derivative estimate.

\begin{proposition}\label{prop:global-hessian}
There exists a constant \(C\), independent of \(\varepsilon\) and \(r\), such
that
\[
  |z|^2\,|D^2u(z)|\leq C(-u(z))\qquad\text{in }\Omega_r .
\]
In particular,
\[
  |D^2u(z)|\leq C|z|^{-2n/k}.
\]
\end{proposition}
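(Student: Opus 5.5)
Fix a point \(z_0\in\Omega_r\) and a real unit direction, viewed as a unit vector \(\xi\in\C^n\). By Lemma~\ref{lem:linear-algebra} applied to \(z_0/|z_0|\) and \(\xi\), we choose \(B=aI+K\), with \(K\in\mathfrak u(n)\) and \(|a|+\norm{B}+\norm{B^2}\le C_n\), such that \(Bz_0=|z_0|\xi\). Set \(u_t(z)=e^{-2at}u(e^{tB}z)\). Since \(e^{tB}=e^{at}e^{tK}\) with \(e^{tK}\) unitary, the complex Hessian transforms as \(\Dc u_t(z)=e^{tB^*}(\Dc u)(e^{tB}z)e^{tB}\,e^{-2at}\), which is unitarily conjugate to \((\Dc u)(e^{tB}z)\). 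Hence \(\sigma_k(\Dc u_t)=\varepsilon\) on the domain where \(e^{tB}z\in\Omega_r\). Differentiating the equation \(\log\sigma_k(\Dc u_t)=\log\varepsilon\) twice in \(t\) at \(t=0\), and using concavity of \(\log\sigma_k\) on the Gårding cone, we get \(L\dot u=0\) and \(L\ddot u\ge 0\). Here \(L=\sigma_k^{i\bar j}\partial_i\partial_{\bar j}\) is the linearized operator, \(\dot u=\frac{d}{dt}u_t|_{t=0}\) and \(\ddot u=\frac{d^2}{dt^2}u_t|_{t=0}\). We compute
\[
\dot u=-2au+\langle Du, Bz\rangle_{\R},\qquad \ddot u=4a^2u-4a\langle Du,Bz\rangle_\R+D^2u[Bz,Bz]+\langle Du,B^2z\rangle_\R ,
\]
so \(\ddot u(z_0)=|z_0|^2D^2u(z_0)[\xi,\xi]+O(|z_0||Du|+|u|)\).

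The catch is that \(\ddot u\) is defined for every \(z\), but the vector field depends on \(z_0\). So we treat \(B\) as fixed and consider the globally defined function \(w=\ddot u+Mu\) on \(\overline{\Omega_r}\). Since \(Lu=k\sigma_k>0\) for admissible \(u\), we have \(Lw\ge 0\), and the maximum principle gives \(\max_{\overline{\Omega_r}}w=\max_{\partial\Omega_r}w\). We can even take \(M=0\), because \(L\ddot u\ge0\) already holds; adding \(Mu\) with \(M\ge 0\) only helps, since \(u<0\).

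On \(\partial\Omega_r\), we bound \(|\ddot u|\le C(|z|^2|D^2u|+|z||Du|+|u|)\le C(-u)\) using \eqref{eq:known-estimates} and \eqref{eq:boundary-second-estimate}, together with \(|Bz|,|B^2z|\le C|z|\). This gives only a constant bound, not a weighted one. Near \(\partial\Omega\), \(-u\sim1\). On \(\partial B_r\), however, \(-u\sim r^\gamma\) can be huge, while at \(z_0\) we want the bound \(C(-u(z_0))\). So a constant bound \(\max_{\partial B_r}\ddot u\le Cr^\gamma\) is too weak, and we need weighting. The fix is to choose \(M\) large: on \(\partial\Omega_r\), \(\ddot u+Mu\le (C_2'-Mc)(-u)\cdot\) something \(\le 0\) once \(M\ge C'\). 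Indeed \(\ddot u\le C'(-u)\) there, so \(w\le (C'-M)(-u)\le0\) for \(M=C'\). The maximum principle then gives \(\ddot u(z_0)\le -Mu(z_0)=M(-u(z_0))\). Combining this with the gradient estimate at \(z_0\) yields \(|z_0|^2D^2u(z_0)[\xi,\xi]\le C(-u(z_0))\). This is the key point, and the constants are independent of \(z_0\) because \(\norm{B}\) is uniformly bounded.

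It remains to turn this one-sided bound into a bound on \(|D^2u|\). The estimate gives an upper bound for every direction \(\xi\). Since \(u\) is \(k\)-subharmonic, \(\Delta u\ge0\), so the trace of \(D^2u\) is nonnegative. Each eigenvalue is therefore bounded above by \(C(-u)|z|^{-2}\) and below by \(-(2n-1)C(-u)|z|^{-2}\). This gives \(|z|^2|D^2u|\le C(-u)\). Finally, \(-u\le C_0|z|^\gamma\) with \(\gamma-2=-2n/k\) gives \(|D^2u|\le C|z|^{-2n/k}\).

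The main obstacle is the computation of \(L\ddot u\ge0\). We must check that differentiating \(\sigma_k(\Dc u_t)\) twice produces \(\sigma_k^{i\bar j}(\ddot u)_{i\bar j}\) plus a concavity term of the correct sign. The \(e^{tB}\)-conjugation terms must cancel exactly, which holds because \(\sigma_k\) is invariant under unitary conjugation and under the scaling built into the factor \(e^{-2at}\). The identity \(\sigma_k(\Dc u_t)(z)=\varepsilon\) for all small \(t\), at each fixed \(z\in\Omega_r\), makes this rigorous: differentiating in \(t\) gives \(\sigma_k^{i\bar j}(\ddot u)_{i\bar j}+\sigma_k^{i\bar j,p\bar q}\dot u_{i\bar j}\dot u_{p\bar q}=0\), and concavity of \(\sigma_k^{1/k}\) on the cone controls the second term. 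There is also a regularity issue, since \(C^4\) is needed only in the interior while \(\ddot u\) is continuous up to the boundary.
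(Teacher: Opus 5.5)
Your proposal is correct and is essentially the paper's proof. It uses the same $B=aI+K$ from Lemma~\ref{lem:linear-algebra}, the same family $u_t(z)=e^{-2at}u(e^{tB}z)$, concavity giving $L\ddot u\ge 0$, the barrier $\ddot u+Mu$ with $Lu>0$ and $|\ddot u|\le C(-u)$ on $\partial\Omega_r$, and the trace condition $\Delta u>0$ for the lower eigenvalue bound. Two small slips do not affect the argument: your early remark that $M=0$ would suffice is too weak for the weighted estimate (as you then notice yourself), and your conjugation formula for $\Dc u_t$ differs from the paper's only by a harmless transpose/conjugate convention.
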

\begin{proof}
Fix \(z_0\in\Omega_r\) and a unit vector
\(\xi=(\xi_1,\ldots,\xi_{2n})\in\R^{2n}\). Set
\(\chi_j=\xi_j+\sqrt{-1}\xi_{j+n}\), \(1\leq j\leq n\). Then
\(|\chi|=1\). By Lemma~\ref{lem:linear-algebra}, there exist
\(a\in\R\) and \(K\in\mathfrak u(n)\) such that \(B:=aI+K\) satisfies
\[
  B\frac{z_0}{|z_0|}=\chi,
  \qquad
  |a|+\norm{B}+\norm{B^2}\leq C_n.
\]
Let \(E_t=e^{tB}\). Since \(B=aI+K\), we consider the small perturbation of the identity matrix
\(E_t=e^{at}U_t\), where \(U_t=e^{tK}\in U(n)\). For each fixed
\(z\in\Omega_r\) and \(t\) sufficiently close to zero, define
\[
  u_t(z)=e^{-2at}u(E_tz).
\]
Since \(E_t=e^{at}U_t\), a direct calculation gives
\[
  \Dc u_t(z)
  =
  U_t^T\Dc u(E_tz)\overline{U_t}.
\]
By the unitary invariance of \(\sigma_k\), it follows that
\[
  \sigma_k(\Dc u_t(z))
  =
  \sigma_k(\Dc u(E_tz))
  =
  \varepsilon.
\]

Let \(F=\log\sigma_k\). Then
\(F(\Dc u_t)=\log\varepsilon\). Differentiating with respect to \(t\)
gives
\begin{equation}\label{eq:hessian-first-variation}
  F^{i\bar j}(\Dc u_t)
  \left(\frac{du_t}{dt}\right)_{i\bar j}=0.
\end{equation}
Differentiating once more, we obtain
\begin{equation}\label{eq:hessian-second-variation}
\begin{aligned}
  0={}
  F^{i\bar j}(\Dc u_t)
  \left(\frac{d^2u_t}{dt^2}\right)_{i\bar j}
  +
  F^{i\bar j,p\bar q}(\Dc u_t)
  \left(\frac{du_t}{dt}\right)_{i\bar j}
  \left(\frac{du_t}{dt}\right)_{p\bar q}.
\end{aligned}
\end{equation}
Since \(F=\log\sigma_k\) is concave in \(\Gamma_k\), the second term in
\eqref{eq:hessian-second-variation} is nonpositive. Let \(t=0\) and we
 obtain
\[
  L\left(
    \left.\frac{d^2u_t}{dt^2}\right|_{t=0}
  \right)\geq0,
  \qquad
  L=F^{i\bar j}(\Dc u)\frac{\partial^2}
  {\partial z_i\partial\bar z_j}.
\]
By the homogeneity of \(\sigma_k\),
\[
  Lu=F^{i\bar j}(\Dc u)u_{i\bar j}=k.
\]

We next compute the second variation. Since
\(\frac{d}{dt}(E_tz)=BE_tz\), differentiating
\(u_t(z)=e^{-2at}u(E_tz)\) gives
\[
  \frac{du_t(z)}{dt}
  =
  e^{-2at}
  \bigl(Du(E_tz)\cdot BE_tz-2a\,u(E_tz)\bigr).
\]
Differentiating once more yields
\[
\begin{aligned}
  \frac{d^2u_t(z)}{dt^2}
  =e^{-2at}\bigl\{
  &D^2u(E_tz)[BE_tz,BE_tz]
   +Du(E_tz)\cdot B^2E_tz\\
  &-4a\,Du(E_tz)\cdot BE_tz
   +4a^2u(E_tz)
  \bigr\}.
\end{aligned}
\]
Thus, at \(t=0\),
\begin{equation}\label{eq:hessian-second-variation-explicit}
\left.\frac{d^2u_t(z)}{dt^2}\right|_{t=0}
=
D^2u(z)[Bz,Bz]+Du(z)\cdot B^2z
-4a\,Du(z)\cdot Bz+4a^2u(z).
\end{equation}

By \eqref{eq:known-estimates}, \eqref{eq:boundary-second-estimate}, and
Lemma~\ref{lem:linear-algebra}, we have on \(\partial\Omega_r\)
\[
  \left|
    \left.\frac{d^2u_t}{dt^2}\right|_{t=0}
  \right|
  \leq
  C\bigl(|z|^2|D^2u|+|z|\,|Du|+(-u)\bigr)
  \leq C(-u).
\]
Choose \(C\) sufficiently large and set
\[
  W=
  \left.\frac{d^2u_t}{dt^2}\right|_{t=0}+Cu.
\]
Then \(W\leq0\) on \(\partial\Omega_r\), while
\[
  LW
  =
  L\left(
    \left.\frac{d^2u_t}{dt^2}\right|_{t=0}
  \right)+CLu
  \geq Ck>0
\]
in \(\Omega_r\). The maximum principle gives \(W\leq0\) in
\(\Omega_r\). Hence
\begin{equation}\label{eq:hessian-second-variation-bound}
  \left.\frac{d^2u_t(z)}{dt^2}\right|_{t=0}
  \leq C(-u(z))
  \qquad\text{in }\Omega_r.
\end{equation}

At \(z_0\), the choice of \(B\) gives \(Bz_0=|z_0|\chi\). Under the
standard identification \(\C^n\simeq\R^{2n}\), the vector \(\chi\)
corresponds to \(\xi\). Therefore
\[
  D^2u(z_0)[Bz_0,Bz_0]
  =
  |z_0|^2D^2u(z_0)[\xi,\xi].
\]
Moreover, by \eqref{eq:known-estimates} and the bounds for \(a\), \(B\),
and \(B^2\),
\[
  \left|
  Du(z_0)\cdot B^2z_0
  -4a\,Du(z_0)\cdot Bz_0
  +4a^2u(z_0)
  \right|
  \leq C(-u(z_0)).
\]
Combining this with
\eqref{eq:hessian-second-variation-explicit} and
\eqref{eq:hessian-second-variation-bound}, we obtain
\[
  |z_0|^2D^2u(z_0)[\xi,\xi]
  \leq C(-u(z_0)).
\]
Since \(z_0\) and \(\xi\) are arbitrary, every eigenvalue of the real
Hessian \(D^2u(z_0)\) is bounded above by
\(C(-u(z_0))|z_0|^{-2}\).

Since \(u\) is \(k\)-admissible, \(\sigma_1(\Dc u)>0\), and hence
\(\Delta_{\R^{2n}}u>0\). If
\(\lambda_1\leq\cdots\leq\lambda_{2n}\) are the eigenvalues of
\(D^2u(z_0)\), the preceding upper bound and
\(\sum_{\alpha=1}^{2n}\lambda_\alpha\geq0\) imply
\[
  \lambda_1
  \geq
  -C\frac{-u(z_0)}{|z_0|^2}.
\]
Consequently,
\[
  |z_0|^2|D^2u(z_0)|\leq C(-u(z_0)).
\]
Finally, using \(-u(z)\leq C_0|z|^\gamma\) with
\(\gamma=2-\frac{2n}{k}\), we obtain
\[
  |D^2u(z)|
  \leq C|z|^{\gamma-2}
  =C|z|^{-2n/k}.
\]
This proves the proposition.
\end{proof}

\section{The Real Hessian Estimate for the Complex Monge-Amp\`ere Equation}\label{sec:cma-hessian}

In the Monge--Amp\`ere case, the argument from the preceding section does not apply directly.
Instead, we exploit the invariance of the equation under volume preserving complex linear
transformations. Accordingly, we set
\[
\mathfrak{sl}(n,\C)
=
\{A\in M_n(\C):\tr A=0\}.
\]

We first record the following elementary linear algebra lemma.
\begin{lemma}\label{lem:sl-linear-algebra}
Assume \(n\geq 2\). For any \(u,v\in\C^n\) with
\(\lvert u\rvert=\lvert v\rvert=1\), there exists
\(B\in\mathfrak{sl}(n,\C)\) such that
\[
Bu=v,\qquad
\norm{B}\leq \sqrt{2},
\qquad
\norm{B^2}\leq 2.
\]
\end{lemma}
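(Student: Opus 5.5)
The plan is to reduce to \(u=e_1\) by unitary conjugation, exactly as in the proof of Lemma~\ref{lem:linear-algebra}, and then write down an explicit rank-two traceless matrix. Conjugation by a unitary matrix preserves the trace, the operator norm, and the norm of the square. So if \(U\) is unitary with \(Uu=e_1\), we set \(\widetilde v=Uv\) and find \(\widetilde B\in\mathfrak{sl}(n,\C)\) with \(\widetilde Be_1=\widetilde v\) and the stated bounds. Then \(B=U^{-1}\widetilde BU\) satisfies \(Bu=U^{-1}\widetilde Be_1=U^{-1}\widetilde v=v\), \(\tr B=\tr\widetilde B=0\), \(\norm{B}=\|\widetilde B\|\) and \(\norm{B^2}=\|\widetilde B^2\|\).

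For \(u=e_1\) and \(v=(v_1,\dots,v_n)\) with \(|v|=1\), define
\[
B=v\,e_1^{*}-v_1\,e_2e_2^{*}.
\]
Here we use \(n\geq2\), so that \(e_2\) exists. The first column of \(B\) is \(v\), so \(Be_1=v\), since \(e_2^*e_1=0\). The diagonal entries of \(B\) are \(v_1\) in position \((1,1)\), \(-v_1\) in position \((2,2)\), and \(0\) elsewhere. Hence \(\tr B=0\) and \(B\in\mathfrak{sl}(n,\C)\).

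For the norm bound, take \(x\in\C^n\). Then \(Bx=x_1v-v_1x_2e_2\). By the triangle inequality and Cauchy--Schwarz,
\[
|Bx|\leq|x_1|+|v_1||x_2|\leq\sqrt{1+|v_1|^2}\,\sqrt{|x_1|^2+|x_2|^2}\leq\sqrt2\,|x|,
\]
using \(|v_1|\leq1\). Hence \(\norm{B}\leq\sqrt2\), and \(\norm{B^2}\leq\norm{B}^2\leq2\).

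There is no real obstacle here. The only point needing care is to keep \(\norm{B}\leq\sqrt2\) while cancelling the trace contribution \(v_1\) of the first column. Placing \(-v_1\) on a diagonal slot orthogonal to \(e_1\) does this, because that entry acts on a coordinate that does not interact with \(x_1\). This is also where the hypothesis \(n\geq2\) enters.
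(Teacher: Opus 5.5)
Your proof is correct and is essentially the paper's: the same reduction to \(u=e_1\) by unitary conjugation and the same matrix \(B=v\,e_1^{*}-v_1e_2e_2^{*}\) (first column \(v\), \(B_{22}=-v_1\)). The only difference is in the norm estimate: the paper bounds \(\|B\|\) by the sum of squared entries \(|v|^2+|v_1|^2\le 2\), whereas you bound the operator norm directly via the triangle inequality and Cauchy--Schwarz.
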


\begin{proof}
Suppose first that \(u=e_1=(1,0,\ldots,0)^T\). Define \(B\) by
$$
  B_{j1}=v_j,\quad 1\leq j\leq n,
  \qquad
  B_{22}=-v_1,
$$
with all other entries equal to zero. Then
$$
  Be_1=v,
  \qquad
  \tr B=0,
$$
and hence \(B\in\SL(n,\C)\). Moreover,
$$
  \norm{B}^2
  =\lvert v\rvert^2+\lvert v_1\rvert^2
  \leq 2.
$$
Thus
$
  \norm{B}\leq\sqrt{2}$ and
  $\norm{B^2}\leq\norm{B}^2\leq2.
$

For general \(u\), choose \(U\in U(n)\) such that \(Uu=e_1\), and set
$$
  \widetilde v=Uv.
$$
By the preceding construction, there exists
\(\widetilde B\in\SL(n,\C)\) such that
$$
  \widetilde B e_1=\widetilde v,
  \qquad
  \|{\widetilde B}\|\leq\sqrt{2}.
$$
Let
$
  B=U^{-1}\widetilde B U,
$
then \(B\in\SL(n,\C)\) and
$$
  Bu
  =U^{-1}\widetilde B Uu
  =U^{-1}\widetilde v
  =v.
$$
Since \(U\) is unitary,
$$
  \norm{B}=\|{\tilde B}\|\leq\sqrt{2},
  \qquad
  \norm{B^2}\leq\norm{B}^2\leq2.
$$

\end{proof}

We now establish the global Hessian estimate for the approximating problem
.
Let
\[
  \Omega_\varepsilon=\Omega\setminus \overline{B_\varepsilon(0)}
\]
and let \(u=u_\varepsilon\in C^4(\Omega_\varepsilon)\cap
C^2(\overline{\Omega_\varepsilon})\) be a plurisubharmonic solution of
\begin{equation}\label{eq:cma-problem}
  \det\ (u_{\varepsilon})_{i\bar j}=\varepsilon>0
  \qquad\text{in }\Omega_\varepsilon .
\end{equation}
To prove the pointwise real Hessian estimate, we need  
Guan's crucial boundary Hessian and global gradient estimates in \cite{Guan1998}
\begin{equation}\label{eq:cma-boundary-estimate}
  \sup_{\partial\Omega_\varepsilon}|z|^2|D^2u_\varepsilon(z)|\leq C_2,
\end{equation}
and
\begin{equation}\label{eq:cma-gradient-estimate}
  \sup_{\Omega_\varepsilon}|z|\,|Du_\varepsilon (z)|\leq C_1.
\end{equation}
 Combing the lemma and the above boundary Hessian estimate and the global gradient estimate, we can prove the following real Hessian estimates. 
\begin{proposition}\label{prop:cma-hessian}
There exists a constant \(C>0\), independent of 
\(\varepsilon\), such that
\[
  |D^2u(z)|\leq \frac{C}{|z|^2}
  \qquad\text{in }\overline\Omega_\varepsilon .
\]
\end{proposition}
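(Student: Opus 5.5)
The plan is to mimic the proof of Proposition~\ref{prop:global-hessian}, now using the invariance of the complex Monge--Amp\`ere operator under $SL(n,\C)$ rather than the conformal--unitary group. We may assume $n\geq 2$. Fix $z_0\in\Omega_\varepsilon$ and a real unit vector $\xi\in\R^{2n}$, and let $\chi\in\C^n$ be the corresponding complex unit vector. Lemma~\ref{lem:sl-linear-algebra} gives $B\in\SL(n,\C)$ with $Bz_0/|z_0|=\chi$, $\norm{B}\le\sqrt2$ and $\norm{B^2}\le2$. Put $E_t=e^{tB}$ and $u_t(z)=u(E_tz)$ for $t$ near $0$.

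Since $E_t$ is complex linear, $\Dc u_t(z)=E_t^T\,\Dc u(E_tz)\,\overline{E_t}$. Because $\det E_t=e^{t\tr B}=1$, this gives $\det\Dc u_t=\det\Dc u(E_tz)=\varepsilon$. Let $F=\log\det$ and differentiate $F(\Dc u_t)=\log\varepsilon$ twice in $t$. The concavity of $\log\det$ on positive Hermitian matrices then yields $L\bigl(\ddot u_t|_{t=0}\bigr)\ge0$, where $L=u^{i\bar j}\partial_i\partial_{\bar j}$. Homogeneity gives $Lu=n$.

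As in \eqref{eq:hessian-second-variation-explicit}, but with no factor $e^{-2at}$, we have
\[
\ddot u_t|_{t=0}(z)=D^2u(z)[Bz,Bz]+Du(z)\cdot B^2z .
\]
On $\partial\Omega_\varepsilon$, \eqref{eq:cma-boundary-estimate} and \eqref{eq:cma-gradient-estimate} bound this by $2|z|^2|D^2u|+2|z||Du|\le C$. In the interior the same gradient estimate bounds the first-order term by $2C_1$.

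The main obstacle is the choice of barrier. In Section~\ref{sec:k-hessian} the barrier was $Cu$, which is comparable to the boundary bound $C(-u)$. Here $u\sim\log|z|$ is unbounded near $0$ while the boundary bound is a constant, so adding $Mu$ would produce the useless estimate $|z|^2|D^2u|\lesssim 1+|\log|z||$. I would instead use an $L$-subharmonic function that is bounded on $\overline\Omega$, namely $|z|^2$: we have $L|z|^2=\sum_i u^{i\bar i}>0$, and it is bounded by $\mathrm{diam}(\Omega)^2$. Set
\[
W=\ddot u_t|_{t=0}+|z|^2 .
\]
Then $LW>0$ in $\Omega_\varepsilon$, so the maximum principle gives $W\le\sup_{\partial\Omega_\varepsilon}W\le C$. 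Hence $\ddot u_t|_{t=0}\le C$ throughout $\Omega_\varepsilon$, with $C$ independent of $\varepsilon$. (Alternatively, the strict inequality is not even needed: $L\ddot u_t\ge0$ already gives $\ddot u_t\le\max_{\partial\Omega_\varepsilon}\ddot u_t\le C$ directly.)

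Evaluating at $z_0$ gives $|z_0|^2D^2u(z_0)[\xi,\xi]\le C+2C_1$. Since $\xi$ is arbitrary, every eigenvalue of $D^2u(z_0)$ is at most $C|z_0|^{-2}$. Plurisubharmonicity gives $\Delta u\ge0$, so the sum of the $2n$ eigenvalues is nonnegative. This bounds the smallest eigenvalue below by $-(2n-1)C|z_0|^{-2}$, and hence $|D^2u(z_0)|\le C|z_0|^{-2}$. The estimate extends to $\partial\Omega_\varepsilon$ by \eqref{eq:cma-boundary-estimate}, which proves Proposition~\ref{prop:cma-hessian}.
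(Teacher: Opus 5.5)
Your proposal is correct and follows the paper's proof essentially step by step. It uses the same $\mathfrak{sl}(n,\C)$ flow $u_t(z)=u(e^{tB}z)$, concavity of $\log\det$ to get $L(\ddot u_t|_{t=0})\ge 0$, the maximum principle with Guan's boundary and gradient bounds, and then the trace condition for the lower eigenvalue bound. The paper applies the weak maximum principle directly to $L(\ddot u_t|_{t=0})\ge 0$, as in your parenthetical remark, so the $|z|^2$ barrier is harmless but unnecessary; your explicit use of $\Delta u\ge 0$ for the lower bound spells out a step the paper leaves implicit.
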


\begin{proof}
Fix \(z_0\in\Omega_\varepsilon\) and a unit vector
\(\xi=(\xi_1,\ldots,\xi_{2n})\in\R^{2n}\). Set
\(\chi_j=\xi_j+\sqrt{-1}\xi_{j+n}\), \(1\leq j\leq n\). Then
\(|\chi|=1\). By Lemma~\ref{lem:sl-linear-algebra}, there exists a
trace-free \(B\in\SL(n,\C)\), with
\(\norm{B}+\norm{B^2}\leq C\), such that
\[
  B\frac{z_0}{|z_0|}=\chi.
\]
Let \(E_t=e^{tB}\). Since \(\tr B=0\), we have \(\det E_t=1\). For
\(t\) sufficiently close to zero, set \(u_t(z)=u(E_tz)\) whenever
\(E_tz\in\Omega_\varepsilon\). Since \(E_t\) is complex linear,
\[
  \det\bigl((u_t)_{i\bar j}(z)\bigr)
  =|\det E_t|^2
    \det\bigl(u_{i\bar j}(E_tz)\bigr)
  =\varepsilon.
\]

Let \(F=\log\det\). Thus
\(F((u_t)_{i\bar j})=\log \varepsilon\). Differentiating with respect
to \(t\), we obtain
\begin{equation}\label{eq:cma-first-variation}
  F^{i\bar j}\bigl((u_t)_{p\bar q}\bigr)
  \left(\frac{du_t}{dt}\right)_{i\bar j}=0.
\end{equation}
Differentiating once again gives
\begin{equation}\label{eq:cma-second-variation}
  F^{i\bar j}\bigl((u_t)_{p\bar q}\bigr)
  \left(\frac{d^2u_t}{dt^2}\right)_{i\bar j}
  +
  F^{i\bar j,k\bar l}\bigl((u_t)_{p\bar q}\bigr)
  \left(\frac{du_t}{dt}\right)_{i\bar j}
  \left(\frac{du_t}{dt}\right)_{k\bar l}
  =0.
\end{equation}
Since \(F=\log\det\) is concave, the second term in
\eqref{eq:cma-second-variation} is nonpositive. Setting \(t=0\), we obtain
\[
  u^{i\bar j}
  \left(
    \left.\frac{d^2u_t}{dt^2}\right|_{t=0}
  \right)_{i\bar j}
  \geq 0.
\]
Thus, \(\left.\frac{d^2u_t}{dt^2}\right|_{t=0}\) is a subsolution of the
linearized equation.

Next, we compute this second variation explicitly. Identifying
\(\C^n\) with \(\R^{2n}\), and using \(E_t=e^{tB}\), we have
\[
  \frac{d}{dt}(E_tz)=BE_tz,
  \qquad
  \frac{d^2}{dt^2}(E_tz)=B^2E_tz.
\]
Hence
\[
  \frac{du_t(z)}{dt}
  =
  Du(E_tz)\cdot BE_tz.
\]
Differentiating once more with respect to \(t\), we obtain
\[
  \frac{d^2u_t(z)}{dt^2}
  =
  D^2u(E_tz)[BE_tz,BE_tz]
  +Du(E_tz)\cdot B^2E_tz.
\]
Therefore,
\begin{equation}\label{eq:cma-ut-second-derivative}
  \left.\frac{d^2u_t(z)}{dt^2}\right|_{t=0}
  =
  D^2u(z)[Bz,Bz]+Du(z)\cdot B^2z.
\end{equation}

On \(\partial\Omega_\varepsilon\), Lemma~\ref{lem:sl-linear-algebra},
\eqref{eq:cma-boundary-estimate}, and
\eqref{eq:cma-gradient-estimate} give
\[
  \left|
    \left.\frac{d^2u_t(z)}{dt^2}\right|_{t=0}
  \right|
  \leq
  \norm{B}^2|z|^2|D^2u(z)|
  +\norm{B^2}|z|\,|Du(z)|
  \leq C.
\]
The maximum principle therefore yields
\[
  \left.\frac{d^2u_t(z)}{dt^2}\right|_{t=0}\leq C
  \qquad\text{in }\Omega_\varepsilon.
\]

At \(z_0\), the choice of \(B\) gives \(Bz_0=|z_0|\chi\). Under the
identification \(\C^n\simeq\R^{2n}\), the vector \(\chi\) corresponds
to \(\xi\). Hence \eqref{eq:cma-ut-second-derivative} gives
\[
  \left.\frac{d^2u_t(z_0)}{dt^2}\right|_{t=0}
  =
  |z_0|^2D^2u(z_0)[\xi,\xi]
  +Du(z_0)\cdot B^2z_0.
\]
By \eqref{eq:cma-gradient-estimate} and the bound for \(B^2\),
\[
  |Du(z_0)\cdot B^2z_0|
  \leq \norm{B^2}|z_0|\,|Du(z_0)|
  \leq C.
\]
It follows that
\[
  D^2u(z_0)[\xi,\xi]\leq \frac{C}{|z_0|^2}.
\]
Since \(z_0\) and \(\xi\) are arbitrary, every eigenvalue of the real
Hessian \(D^2u(z_0)\) is bounded above by \(C|z_0|^{-2}\).
Therefore, we get
\[
  |D^2u(z_0)|\leq \frac{C}{|z_0|^2}.
\]
Together with \eqref{eq:cma-boundary-estimate}, this proves the proposition.
\end{proof}

\section{The pointwise real Hessian estimate for the exterior problem}\label{section4}

Let \(\Omega\subset\mathbb C^n\) be a smooth strongly pesudoconvex domain such that \(0\in \Omega\) and \(\overline\Omega\)  holomorphically convex in a ball centered at \(0\).
For sufficiently large \(R\), let \(\Sigma_R=B_R(0)\setminus \overline{\Omega}\). Let \(u^{\varepsilon,R}\) be a solution to
\begin{equation}\label{eq::GuanBoMAExterior}
\begin{cases}
\mathrm{det}\ u^{\varepsilon,R}_{i\bar j}=f^{\varepsilon}&\quad \text{in }\Sigma_R,\\
u=0&\quad\text{on }\partial\Sigma_R,\\
u=\log|z|+O(1)&\quad\text{as }|z|\rightarrow\infty.
\end{cases}
\end{equation}
where 
\(f^{\varepsilon}(z)=2^{-n}\varepsilon^2(|z|^2+\varepsilon^2)^{-n-1}\). As proved by Guan in \cite{Guan2007}, there is a positive constant \(C>0\), such that
\[-C\leq u^{\varepsilon,R}-\log|z|\leq C,\quad |Du^{\varepsilon,R}|\leq \frac{C}{|z|},\quad |D^2u^{\varepsilon,R}|\le C\quad  \text{in }\Sigma_R,\]
and 
\[|D^2u^{\varepsilon,R}(z)|<\frac{C}{|z|^2}\quad\text{on }\partial\Sigma_R.\] 
We prove a pointwise decay estimate for real Hessian below.
\begin{proposition}
There is a positive constant independent of \(\varepsilon\) and \(R\) such that
\[|D^2u^{\varepsilon,R}|\leq \frac{C}{|z|^2}\quad \text{in }\overline{\Sigma_R}.\]
\end{proposition}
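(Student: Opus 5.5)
The plan is to adapt the $\mathfrak{sl}(n,\C)$ argument of Proposition~\ref{prop:cma-hessian}. Two changes are needed. The right-hand side $f^\varepsilon$ is no longer invariant under the flow, and we compensate with the radial vector field, as announced in the introduction.

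\emph{Setting up the flow.} Fix $z_0\in\Sigma_R$ and a unit $\xi\in\R^{2n}$ with associated $\chi\in\C^n$. By Lemma~\ref{lem:sl-linear-algebra}, choose $B\in\SL(n,\C)$ with $Bz_0=|z_0|\chi$ and $\norm{B}+\norm{B^2}\le C$. Put $E_t=e^{tB}$ and $u_t(z)=u(E_tz)$ for $z\in\Sigma_R$ and $|t|$ small, writing $u=u^{\varepsilon,R}$. Since $\det E_t=1$,
\[
\log\det\bigl((u_t)_{i\bar j}(z)\bigr)=\log f^\varepsilon(E_tz)=c_{n,\varepsilon}-(n+1)\log\bigl(|E_tz|^2+\varepsilon^2\bigr).
\]

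\emph{Differential inequality for the second variation.} Differentiate twice in $t$, set $t=0$, and use concavity of $\log\det$. With $L=u^{i\bar j}\partial_i\partial_{\bar j}$ this gives
\[
L\Bigl(\ddot u\Bigr)\ \ge\ \frac{d^2}{dt^2}\Big|_{t=0}\Bigl[-(n+1)\log(|E_tz|^2+\varepsilon^2)\Bigr],\qquad \ddot u:=\frac{d^2u_t}{dt^2}\Big|_{t=0}.
\]
The right side is a combination of $\bigl(2\mathrm{Re}\langle Bz,z\rangle\bigr)^2/(|z|^2+\varepsilon^2)^2$, $|Bz|^2/(|z|^2+\varepsilon^2)$ and $\mathrm{Re}\langle B^2z,z\rangle/(|z|^2+\varepsilon^2)$. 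Each of these is bounded by $C|z|^2/(|z|^2+\varepsilon^2)\le C$, so $L\ddot u\ge -C_3$ with $C_3$ independent of $\varepsilon,R$. As in \eqref{eq:cma-ut-second-derivative}, $\ddot u=D^2u[Bz,Bz]+Du\cdot B^2z$.

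\emph{Barrier from the radial field.} Let $Xu=\sum_j z_ju_{z_j}+\sum_j\bar z_ju_{\bar z_j}$, the derivative at $s=0$ of $u(e^sz)$. The function $v_s(z)=u(e^sz)$ satisfies $\det (v_s)_{i\bar j}=e^{2ns}f^\varepsilon(e^sz)$. Differentiating the logarithm once gives
\[
L(Xu)=2n-\frac{2(n+1)|z|^2}{|z|^2+\varepsilon^2}.
\]
Since $0\in\Omega$, we have $|z|\ge c_0>0$ on $\Sigma_R$. Hence for $\varepsilon$ small, $-L(Xu)\ge 2(n+1)(1-\varepsilon^2/c_0^2)-2n\ge1$. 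Set $W=\ddot u-C_4Xu$ with $C_4\ge C_3$; then $LW\ge0$ in $\Sigma_R$. On $\partial\Sigma_R$, the boundary Hessian bound $|D^2u|\le C/|z|^2$, the gradient bound $|Du|\le C/|z|$ and the bounds on $B$ give $|\ddot u|\le C$ and $|Xu|\le |z||Du|\le C$. The maximum principle then yields $W\le C$ in $\Sigma_R$. Using $|Xu|\le C$ again everywhere, we get $\ddot u\le C$ in $\Sigma_R$.

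\emph{Conclusion.} At $z_0$ we have $\ddot u(z_0)=|z_0|^2D^2u(z_0)[\xi,\xi]+Du(z_0)\cdot B^2z_0$, and $|Du(z_0)\cdot B^2z_0|\le C|z_0||Du(z_0)|\le C$. Hence $D^2u(z_0)[\xi,\xi]\le C|z_0|^{-2}$ for every unit $\xi$. Plurisubharmonicity gives $\Delta u\ge0$, so the sum of the eigenvalues of $D^2u(z_0)$ is nonnegative. The smallest eigenvalue is therefore bounded below by $-(2n-1)C|z_0|^{-2}$, which yields $|D^2u|\le C|z|^{-2}$ in $\Sigma_R$, and on $\partial\Sigma_R$ by the boundary estimate.

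The main obstacle is the barrier step. The sign of $L(Xu)$ is favorable only because $|z|$ is bounded below away from $\varepsilon$ on $\Sigma_R$. If this failed, the fallback would be to add a multiple of $u$ or of $\log(|z|^2+\varepsilon^2)$ to the barrier.
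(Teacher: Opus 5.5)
Your proposal is correct and follows the paper's proof essentially step for step. The ingredients match: the $\mathfrak{sl}(n,\mathbb C)$ flow giving $L\ddot u\ge -C$; the radial barrier $\varphi=\sum_p(z_pu_p+\bar z_pu_{\bar p})$ with $L(-\varphi)\ge 1$, which holds because $|z|\ge r_0$ on $\Sigma_R$ and $\varepsilon$ is small; the maximum principle applied to $\ddot u-C\varphi$; and the trace argument for the lower eigenvalue bound. Your computation of $L\varphi$ via the dilation $u(e^sz)$ is just a more explicit version of the paper's ``differentiating the equation.''
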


\begin{proof}
Write \(u=u^{\varepsilon,R}\). Fix \(z_0\in\Sigma_R\) and a unit vector
\(\xi=(\xi_1,\ldots,\xi_{2n})\in\R^{2n}\), and set
\(\chi_j=\xi_j+\sqrt{-1}\xi_{j+n}\). By
Lemma~\ref{lem:sl-linear-algebra}, there exists
\(B\in\mathfrak{sl}(n,\C)\), with
\(\|B\|+\|B^2\|\leq C\), such that
\[
  B\frac{z_0}{|z_0|}=\chi.
\]
Let \(E_t=e^{tB}\) and \(u_t(z)=u(E_tz)\). Since \(\tr B=0\),
\(\det E_t=1\), and hence
\[
  \det\bigl((u_t)_{i\bar j}(z)\bigr)=f^\varepsilon(E_tz).
\]

Set
\[
  w=\left.\frac{d^2u_t}{dt^2}\right|_{t=0},
  \qquad
  L=u^{i\bar j}\partial_i\partial_{\bar j}.
\]
Applying the second-variation argument from the proof of
Proposition~\ref{prop:cma-hessian} to
\[
  \log\det\bigl((u_t)_{i\bar j}\bigr)
  =\log f^\varepsilon(E_tz),
\]
we obtain
\[
  Lw\geq
  \left.\frac{d^2}{dt^2}\log f^\varepsilon(E_tz)\right|_{t=0}.
\]
Since
\[
  \log f^\varepsilon(z)
  =C_\varepsilon-(n+1)\log(|z|^2+\varepsilon^2),
\]
and \(\|B\|+\|B^2\|\leq C\), the right-hand side is bounded below by
\(-C\), uniformly in \(\varepsilon\), \(R\), and \(B\). Thus
\[
  Lw\geq-C.
\]

Define
\[
  \varphi(z)=\sum_{p=1}^n
  \bigl(z_pu_p+\bar z_pu_{\bar p}\bigr).
\]
The gradient estimate gives \(|\varphi|\leq C\). Moreover,
differentiating the equation  yields
\[
\begin{aligned}
  L\varphi
  &=2n+\sum_{p=1}^n
  \left(
    z_p(\log f^\varepsilon)_p
    +\bar z_p(\log f^\varepsilon)_{\bar p}
  \right)\\
  &=-2+2(n+1)\frac{\varepsilon^2}
  {|z|^2+\varepsilon^2}.
\end{aligned}
\]
Since \(0\in\Omega\), there is \(r_0>0\), independent of \(R\), such
that \(|z|\geq r_0\) in \(\Sigma_R\). Hence, for \(\varepsilon\)
sufficiently small,
\[
  L(-\varphi)\geq1
  \qquad\text{in }\Sigma_R.
\]
Choose \(C_0\) sufficiently large and set \(W=w-C_0\varphi\). Then
\(LW\geq0\) in \(\Sigma_R\).

As in the proof of Proposition~\ref{prop:cma-hessian}, let
\[
  w=D^2u[Bz,Bz]+Du\cdot B^2z.
\]
The boundary Hessian estimate and the gradient estimate imply
\(|w|\leq C\) on \(\partial\Sigma_R\). Since \(|\varphi|\leq C\), we also
have \(W\leq C\) on \(\partial\Sigma_R\). The maximum principle gives
\(W\leq C\) in \(\Sigma_R\), and therefore \(w\leq C\).

At \(z_0\), \(Bz_0=|z_0|\chi\), and \(\chi\) corresponds to the real
vector \(\xi\). Hence
\[
  w(z_0)
  =
  |z_0|^2D^2u(z_0)[\xi,\xi]
  +Du(z_0)\cdot B^2z_0.
\]
Using \(|z|\,|Du(z)|\leq C\), we obtain
\[
  D^2u(z_0)[\xi,\xi]\leq\frac{C}{|z_0|^2}.
\]
Since \(z_0\) and \(\xi\) are arbitrary, all eigenvalues of the real
Hessian are bounded above by \(C|z|^{-2}\). Finally, \(u\) is
plurisubharmonic, so \(\Delta u\geq0\); the trace condition gives the
corresponding lower bound. Therefore
\[
  |D^2u(z)|\leq\frac{C}{|z|^2}
  \qquad\text{in }\overline{\Sigma_R}.
\]
\end{proof}

Let \(\Omega\subset\mathbb C^n\) be a smoothly strongly pesudoconvex domain such that \(0\in \Omega\) and \(\overline\Omega\) is holomorphically convex in a ball centered at \(0\).
For sufficient large \(R\), let \(\Sigma_R=B_R(0)\setminus \overline{\Omega}\). Let \(u^{\varepsilon,R}\) be a solution to
\begin{equation}\label{eq::GMZkExterior}
\begin{cases}
\sigma_k(u^{\varepsilon,R}_{i\bar j})=f^{\varepsilon}&\quad \text{in }\Sigma_R,\\
u=-1&\quad\text{on }\partial\Sigma_R,\\
u\rightarrow0&\quad\text{as }|z|\rightarrow\infty.
\end{cases}
\end{equation}
where 
\(f^{\varepsilon}(z)=C_{n,k}\varepsilon^2(1+\varepsilon^2)^{n-k}(|z|^2+\varepsilon^2)^{-n-1}\), \(C_\varepsilon>0\) is a constant. As proved in \cite{GaoMaZhangExterior}, there is a positive constant \(C>0\), such that
\begin{equation}\label{est::boundarykexterior1}
    C^{-1}|z|^{2-\frac{2n}k}<-u^{\varepsilon,R}<C|z|^{2-\frac{2n}k} ,\quad |Du^{\varepsilon,R}(z)|<C|z|^{1-\frac{2n}k}\quad \text{in }\Sigma_R,
\end{equation}
and 
\begin{equation}\label{est::boundarykexterior2}
    |D^2u^{\varepsilon,R}(z)|<C|z|^{-\frac{2n}k}\quad\text{on }\partial\Sigma_R.
\end{equation} 
We prove a pointwise dacay estimate for real Hessian in the following proposition.
\begin{proposition}
There is a positive constant independent of \(\varepsilon\) and \(R\) such that
\[|D^2u^{\varepsilon,R}|\leq C|z|^{-\frac{2n}{k}}.\]
\end{proposition}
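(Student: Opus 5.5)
We will combine the Section 2 construction with the correction term from the exterior Monge--Amp\`ere case. Fix \(z_0\in\Sigma_R\) and a unit vector \(\xi\in\R^{2n}\), and let \(\chi\in\C^n\) be the corresponding complex vector. By Lemma~\ref{lem:linear-algebra} we choose \(B=aI+K\) with \(K\in\mathfrak u(n)\), \(Bz_0=|z_0|\chi\), and \(|a|+\norm{B}+\norm{B^2}\le C_n\). Set \(E_t=e^{tB}=e^{at}U_t\) and \(u_t(z)=e^{-2at}u(E_tz)\). As in Proposition~\ref{prop:global-hessian}, \(\Dc u_t(z)=U_t^T\Dc u(E_tz)\overline{U_t}\), so unitary invariance gives
\[
\log\sigma_k(\Dc u_t(z))=\log f^\varepsilon(E_tz).
\]
We differentiate twice in \(t\) and use the concavity of \(\log\sigma_k\) on \(\Gamma_k\). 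With \(w=\frac{d^2u_t}{dt^2}\big|_{t=0}\) and \(L=F^{i\bar j}\partial_i\partial_{\bar j}\), \(F=\log\sigma_k\), this yields
\[
Lw\ge \frac{d^2}{dt^2}\log f^\varepsilon(E_tz)\Big|_{t=0}.
\]
Since \(\log f^\varepsilon=C_\varepsilon-(n+1)\log(|z|^2+\varepsilon^2)\), a direct computation bounds the right side below by \(-C\frac{|z|^2}{|z|^2+\varepsilon^2}\ge -C\). This bound is uniform in \(\varepsilon,R,B\), because \(\frac{d}{dt}|E_tz|^2\) and \(\frac{d^2}{dt^2}|E_tz|^2\) are controlled by \(C|z|^2\). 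As before,
\[
w=D^2u[Bz,Bz]+Du\cdot B^2z-4a\,Du\cdot Bz+4a^2u.
\]

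The main obstacle is the barrier. The right side \(-C\) does not decay, while \(w\) should be of size \(|z|^{2}|D^2u|\sim|z|^{\gamma}\), \(\gamma=2-2n/k<0\). The target is therefore \(w\le C(-u)\), which decays, and a constant barrier is too crude. We first try \(u\) itself: by homogeneity \(Lu=F^{i\bar j}u_{i\bar j}=k\), so \(L(Mu)=Mk\) absorbs \(-C\). Setting \(W=w+Mu\) with \(M\) large gives \(LW\ge Mk-C>0\). On \(\partial\Sigma_R\), the estimates \eqref{est::boundarykexterior1} and \eqref{est::boundarykexterior2} and the bounds on \(a,B\) give
\[
|w|\le C\bigl(|z|^2|D^2u|+|z||Du|+(-u)\bigr)\le C|z|^{\gamma}\le C(-u).
\]
Hence, enlarging \(M\), \(W\le0\) there, and the maximum principle gives \(w\le M(-u)\) in \(\Sigma_R\). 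Because the term \(Lw\ge-C\) is not multiplied by a decaying weight, the choice of \(M\) is indeed independent of \(R\). If a sharper right-hand side is needed, the alternative is \(\varphi=\sum_p(z_pu_p+\bar z_pu_{\bar p})\). This is the radial derivative term, and differentiating the equation gives \(L\varphi=2k+\sum_p(z_p(\log f^\varepsilon)_p+c.c.)\approx 2k-2(n+1)+O(\varepsilon^2)\). The \(u\)-barrier is simpler, however, and already suffices.

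Finally, at \(z_0\) we have \(D^2u(z_0)[Bz_0,Bz_0]=|z_0|^2D^2u(z_0)[\xi,\xi]\). The remaining terms of \(w(z_0)\) are bounded by \(C(|z_0||Du(z_0)|+(-u(z_0)))\le C|z_0|^{\gamma}\) by \eqref{est::boundarykexterior1}. Therefore
\[
D^2u(z_0)[\xi,\xi]\le C|z_0|^{\gamma-2}=C|z_0|^{-2n/k}.
\]
This bounds all eigenvalues of \(D^2u(z_0)\) from above. Since \(k\)-admissibility gives \(\Delta u>0\), the smallest eigenvalue satisfies \(\lambda_1\ge -(2n-1)C|z_0|^{-2n/k}\). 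Combined with \eqref{est::boundarykexterior2} on the boundary, this yields \(|D^2u^{\varepsilon,R}|\le C|z|^{-2n/k}\) on \(\overline{\Sigma_R}\).
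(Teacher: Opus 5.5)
Your proposal is correct and follows the paper's proof. It uses the same ingredients: \(B=aI+K\) from Lemma~\ref{lem:linear-algebra}, the second \(t\)-variation with concavity of \(\log\sigma_k\) giving \(Lw\ge -C\), the barrier \(w+Mu\) via \(Lu=k\), the maximum principle, and the trace condition for the lowest eigenvalue. The only cosmetic difference is your normalization \(u_t=e^{-2at}u(E_tz)\), as in Section~\ref{sec:k-hessian}. The paper instead takes \(u_t=u(E_tz)\), so that \(\log\sigma_k(\Dc u_t)\) picks up a term \(2akt\) that vanishes after the second \(t\)-derivative.
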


\begin{proof}
Denote by \(u=u^{\varepsilon,R}\) for short. Let \(z_0\in \Sigma_R\) be a fix point. For any unit vector \(\xi=(\xi^1,\cdots,\xi^{2n})\in\mathbb R^{2n}\), by lemma \ref{lem:linear-algebra}, there exist \(a\in\mathbb R\) and \(K\in \mathfrak u(n)\), such that \(B:=aI+K\) satisfies
\[B\frac{z_0}{|z_0|}=\chi,\quad |a|+\|B\|+\|B^2\|\leq C_n,\] where \(\chi_j=\xi_j+\sqrt{-1}\xi_{j+n}\) and \(C_n\) is a positive constant depending only on \(n\).

As in the proof of proposition \ref{prop:global-hessian}, set \(E_t:=e^{tB}\), which is a small pertubation of the identity matrix. Since \(B=aI+K\) , \(E_t=e^{at}U_t\), where \(U_t:=e^{tK}\). Since \(K^*=-K\),  \(U_t^*U_t=e^{tK^*+K}=I\). So \(U_t\in U(n)\). For any \(z\in\Sigma_R\), and \(t\) sufficent small,  define 
\[u_t(z)=u(E_tz).\]
Then 
\[D^2_{\mathbb C}u_t=e^{2at}U_{t}^TD^2_{\mathbb C}u(E_tz)\overline U_t.\]
By unitary invariance of \(\sigma\), it follows that
\[\sigma_{k}(D^2_{\mathbb C}u_t)=e^{2akt}\sigma_k(D^2_{\mathbb C}u(E_tz)).\] 
Let \(F=\log\sigma_k\), and \(\tilde f(z)=\log C_{n,k}+2\log\varepsilon+(n-k)\log(1+\varepsilon^2)-(n+1)\log(|z|^2+\varepsilon^2)\). Then \(F(D^2_{\mathbb C}u_t(z))=2akt+\tilde f(E_tz)\), with \(\tilde f(E_tz)=\log C_n^k+2\log\varepsilon+(n-k)\log(1+\varepsilon^2)-(n+1)\log(e^{2at}|z|^2+\varepsilon^2)\).
Differentiating with respect \(t\) gives
\[F^{i\bar j}(D^2_{\mathbb C}u_t)(\frac{\partial u_t}{\partial t})_{i\bar j}=2ak+\frac{\partial \tilde f(E_tz)}{\partial t}.\]
Since
\[\frac{d}{dt}E_t=\frac{d}{dt}e^{tB}=Be^{tB}=BE_t,\]
we have
\[\frac{\partial}{\partial t}u_t(z)=\frac{\partial}{\partial t}u(E_tz)=Du(E_tz)\cdot\frac{\partial }{\partial t}E_tz=Du(E_tz)\cdot BE_tz.\]
and
\[\begin{aligned}
    \frac{\partial^2}{\partial t^2}u_t=&Du(E_tz)\cdot B^2E_tz+D^2_{\mathbb C}u(E_tz)[BE_tz,BE_tz].
\end{aligned}\]
Differentiating once more, we obtain
\[F^{i\bar j}(D^2_{\mathbb C}u_t)(\frac{\partial^2u_t}{\partial t^2})_{i\bar j}+F^{i\bar j,r\bar s}(\frac{\partial u_t}{\partial t})_{i\bar j}(\frac{\partial u_t}{\partial t})_{r\bar s}=\frac{\partial^2f(E_tz)}{\partial t^2}.\]
Since \(F\) is concave in \(\Gamma_k\), the second term in above equality is nonpositve. Hence
\[L\frac{\partial^2 u_t}{\partial t^2}\geq \frac{\partial^2f(E_tz)}{\partial t^2}=\frac{-4a^2e^{2at}(n+1)|z|^2}{e^{2at}|z|^2+\varepsilon^2}+\frac{4a^2e^{2at}|z|^4}{(e^{2at}|z|^2+\varepsilon^2)^2},\]
where 
\(L=F^{i\bar j}(D^2_{\mathbb C})\frac{\partial^2}{\partial z_i\partial\bar z_j}\). Let \(t=0\), we have
\[\begin{aligned}
    \frac{\partial^2}{\partial t^2}u_t\bigg|_{t=0}=&Du\cdot B^2z+D^2u[Bz,Bz].
\end{aligned}\]
\[\mathrm{LHS}=L\bigg(\frac{\partial^2}{\partial t^2}u_t\bigg|_{t=0}\bigg),\]
and
\[\begin{aligned}
\mathrm{RHS}=&\frac{-4(n+1)a^2|z|^2\varepsilon^2}{(|z|^2+\varepsilon^2)^2}.
\end{aligned}\]
Then
\[L\bigg(\frac{\partial^2}{\partial t^2}u_t\bigg|_{t=0}\bigg)\geq -C_n.\]
On the other hand, since
\[Lu=k,\]
for a sufficently large \(M\) depending only on \(n\) and \(k\), there holds
\[L\bigg(\frac{\partial^2}{\partial t^2}u_t\bigg|_{t=0}+Mu\bigg)\geq 0\quad \text{in }\Sigma_R.\]
By \eqref{est::boundarykexterior1} and \eqref{est::boundarykexterior2}, \
\[\frac{\partial^2}{\partial t^2}u_t\bigg|_{t=0}+Mu\leq 0\quad\text{on }\partial\Sigma_R,\]
it follows from the maximum principle that
\[\frac{\partial^2}{\partial t^2}u_t\bigg|_{t=0}+Mu\leq 0\quad\text{in }\Sigma_R.\]
Hence \[\frac{\partial^2}{\partial t^2}u_t(z_0)\bigg|_{t=0}\leq C|z_0|^{2-\frac{2n}k},\] where \(C\) in above inequality is independent of \(R\), \(\varepsilon\), \(z_0\) and \(\xi\).

Since 
\[|z_0|^2\xi^TD^2u(z_0)\xi=\frac{\partial^2}{\partial t^2}u_t(z_0)\bigg|_{t=0}-\langle Du(z_0),B^2z_0\rangle.\]
We get
\[\xi^TD^2u(z_0)\xi\leq C|z_0|^{-\frac{2n}k}.\]
Since \(C\) in above estimate is independent of \(z_0\) and \(\xi\), and \(u\) is subharmonic, we finally obtain 
\[|D^2u|\leq C|z|^{-\frac{2n}k}\quad \text{in }\Sigma_R.\]

\end{proof}

\bigskip
\begingroup
\small
\noindent\textsuperscript{1}School of Mathematics and Statistics, Xi'an Jiaotong University,
Xi'an 710049, Shaanxi Province, China

\noindent\textit{E-mail address}: \href{mailto:gzhmath@xjtu.edu.cn}{\nolinkurl{gzhmath@xjtu.edu.cn}} (Zhenghuan Gao)

\medskip
\noindent\textsuperscript{2}School of Mathematical Sciences, University of Science and Technology of China,
Hefei 230026, Anhui Province, China

\noindent\textit{E-mail address}: \href{mailto:xinan@ustc.edu.cn}{\nolinkurl{xinan@ustc.edu.cn}} (Xi-Nan Ma)

\noindent\textit{E-mail address}: \href{mailto:baoyu1@mail.ustc.edu.cn}{\nolinkurl{baoyu1@mail.ustc.edu.cn}} (Bao Yu)

\medskip
\noindent\textsuperscript{3}School of Mathematical Sciences, East China Normal University,
Shanghai 200241, China

\noindent\textit{E-mail address}: \href{mailto:dkzhang@math.ecnu.edu.cn}{\nolinkurl{dkzhang@math.ecnu.edu.cn}} (Dekai Zhang)
\endgroup


\begin{thebibliography}{99}

\bibitem{Blocki2000}
Z. B\l{}ocki,
\emph{The \(C^{1,1}\) regularity of the pluricomplex Green function},
Michigan Math. J. \textbf{47} (2000), no. 2, 211--215.

\bibitem{Blocki2005}
Z. B\l{}ocki,
\emph{Weak solutions to the complex Hessian equation},
Ann. Inst. Fourier (Grenoble) \textbf{55} (2005), no. 5, 1735--1756.

\bibitem{CNS1986}
L. A. Caffarelli, L. Nirenberg, and J. Spruck,
\emph{The Dirichlet problem for the degenerate Monge-Amp\`ere equation},
Rev. Mat. Iberoam. \textbf{2} (1986), no. 1, 19--27.

\bibitem{GaoMaZhang2023}
Z. Gao, X.-N. Ma, and D. Zhang,
\emph{The Dirichlet problem of homogeneous complex \(k\)-Hessian equation
in a \((k-1)\)-pseudoconvex domain with isolated singularity},
arXiv:2304.08407 (2023).

\bibitem{GaoMaZhangReal2023}
Z. Gao, X.-N. Ma, and D. Zhang,
\emph{The Dirichlet problem of the homogeneous \(k\)-Hessian equation
in a punctured domain}, arXiv:2303.07976 (2023).

\bibitem{GaoMaZhangExterior}
Z. Gao, X.-N. Ma, and D. Zhang,
\emph{The exterior Dirichlet problem for the homogeneous complex
\(k\)-Hessian equation},
Adv. Nonlinear Stud. \textbf{23} (2023), no. 1,
doi:10.1515/ans-2022-0039.

\bibitem{Guan1998}
B. Guan,
\emph{The Dirichlet problem for complex Monge-Amp\`ere equations and regularity of the pluricomplex Green function},
Comm. Anal. Geom. \textbf{6} (1998), 687--703.


\bibitem{Guan2007}
B. Guan,
\emph{On the regularity of the pluricomplex Green functions},
Int. Math. Res. Not. IMRN (2007), Art. ID rnm106, 19 pp.,
doi:10.1093/imrn/rnm106.


\bibitem{Krylov1995}
N. V. Krylov,
\emph{A theorem on the degenerate elliptic Bellman equations in bounded domains},
Differential Integral Equations \textbf{8} (1995), 961--980.



\bibitem{Li2004}
S.-Y. Li,
\emph{On the Dirichlet problems for symmetric function equations of the
eigenvalues of the complex Hessian},
Asian J. Math. \textbf{8} (2004), no. 1, 87--106.
\end{thebibliography}
\end{document}